\documentclass[reqno]{amsart} \usepackage{amscd}
\usepackage{epsf}
\newtheorem{theorem}{Theorem}[section]
\newtheorem{proposition}[theorem]{Proposition}

\theoremstyle{remark} 

%
%

\begin{document}

\title[Ramanujan's identities, minimal surfaces and solitons]{Ramanujan's identities, minimal surfaces and solitons }

\author{Rukmini Dey}
\address{School of Mathematics\\
Harish Chandra Research Institute\\
Allahabad, 211019, India\\
rkmn@mri.ernet.in}

\begin{abstract}
Using Ramanujan's identites and the Weierstrass-Enneper representation of minimal surfaces, and the analogue for Born-Infeld solitons,  we derive further non-trivial identities.
\end{abstract}

\maketitle

\section{Introduction}

Using some of Ramanujan's identites and the Weierstrass-Enneper representation 
of minimal surfaces, and the analogue for Born-Infeld solitons, we obtain non-trivial identities. They have the  feature that most of them depend on just one 
complex parameter.  Ramanujan's idenities 
were first 
used in the context of minimal surfaces perhaps by  Kamien, ~\cite{K}. 

The identities we obtain in this paper are:

1)  For $\zeta \neq \pm 1, \pm i $, 

\begin{eqnarray*}
 & &   {\rm Re} {\rm ln}(\frac{ 1 + \zeta^2}{  1 - \zeta^2})   \\
& & =  \sum_{k=1}^{\infty} {\rm ln} (\frac{ - {\rm Im} {\rm ln} (\frac{1 + \zeta}{1-\zeta})  -(k - \frac{1}{2}) \pi}{  2 {\rm Re} {\rm tan}^{-1}(\zeta) - (k - \frac{1}{2})\pi}) + \sum_{k=1}^{\infty} {\rm ln} ( \frac{ - {\rm Im} {\rm ln} (\frac{1 + \zeta}{1-\zeta}) + (k - \frac{1}{2}) \pi}{ 2 {\rm Re} {\rm tan}^{-1}(\zeta)    + (k - \frac{1}{2})\pi})
\end{eqnarray*}

2) For $r, s \neq \pm 1$,

\begin{eqnarray*}
 & &   \frac{1}{2} {\rm ln} ( \frac{1 + r^2}{1-r^2} ) +  \frac{1}{2} {\rm ln} ( \frac{1 + s^2}{1-s^2} )  \\
& &  =  \sum_{k=1}^{\infty} {\rm ln} (\frac{ (k - \frac{1}{2}) \pi -i ( - {\rm tanh }^{-1} (r) + {\rm tan h }^{-1} (s)  ) }{ (k - \frac{1}{2})\pi -  (  + {\rm tan}^{-1} (r) + {\rm tan}^{-1} (s) )  }) \\
& & + \sum_{k=1}^{\infty} {\rm ln} (  \frac{ (k - \frac{1}{2}) \pi + i ( - {\rm tanh }^{-1} (r) + {\rm tan h }^{-1} (s)  ) }{ (k - \frac{1}{2})\pi  + {\rm tan}^{-1} (r) + {\rm tan}^{-1} (s)})
\end{eqnarray*}

3) For $\zeta \neq 0$,

\begin{eqnarray*}
 & &   -\frac{\pi}{2} +  \rm{Im} (ln \zeta) -  {\rm tan}^{-1} [ {\rm tan h} ( \frac{1}{2} \rm{Re} ( \zeta - \frac{1}{\zeta}) ){\rm cot} ( -\frac{1}{2} \rm{Im} ( \zeta + \frac{1}{\zeta})  ) ]   \\
  &=& - \sum^{k= \infty}_{k = 1} {\rm tan }^{-1} ( \frac{\frac{1}{2} \rm{Re} ( \zeta - \frac{1}{\zeta})  }{-\frac{1}{2} \rm{Im} ( \zeta + \frac{1}{\zeta}) + k \pi})
- \sum^{k= \infty}_{k = 1} {\rm tan }^{-1} ( \frac{\frac{1}{2} \rm{Re} ( \zeta - \frac{1}{\zeta})}{ -\frac{1}{2} \rm{Im} ( \zeta + \frac{1}{\zeta})- k \pi})
\end{eqnarray*}

4) 
 For $\zeta \neq \pm e^{\pm i \frac{\pi}{4}}$, there exists two integers $m,n$ 
such that    
\begin{eqnarray*} 
& & (n \pi +  \rm{Im} \rm{ln}(\frac{\zeta^2 + \sqrt{2} \zeta + 1}{\zeta^2 - \sqrt{2} \zeta + 1}) \\
&=&  \sum^{k= \infty}_{k = -\infty} {\rm tan }^{-1} ( \frac{\sqrt{2} \rm{Im} \rm{tan}^{-1} \zeta^2  }{m \pi +2 \rm{Re} \arctan(\frac{\zeta \sqrt{2}}{1 - \zeta^2})   +  k \pi})
\end{eqnarray*}

\section{The Identities}

 Recall the Weierstrass-Enneper representation  ~\cite{N} (page 147),  ~\cite{D1}, 
of minimal surfaces, namely, 
 in the neighborhood of a
nonumbilic interior point, any minimal surface can
be represented  as follows, 
\begin{eqnarray*}
x(\zeta) &=& \tilde{x}_0 + \rm{Re} \int_{\zeta_0}^{\zeta} (1-w^2) R(w) \,d w\\
y(\zeta) &=& \tilde{y}_0 + \rm{Re} \int_{\zeta_0}^{\zeta} i (1 + w^2) R(w) \,d w\\
z(\zeta) &=& \tilde{z}_0 + \rm{Re} \int_{\zeta_0}^{\zeta} 2 w R(w) \,d w
\end{eqnarray*}

Here $\zeta$ is a complex parameter and $R(w)$ is a meromorphic function. 
This is an isothermal representation (w.r.t.  $\zeta_1$ and $\zeta_2$ where 
$\zeta = \zeta_1 + i \zeta_2.$).   In ~\cite{D1} and ~\cite{D2}, we show, using hodographic coordinates,  how to 
compute the $R(w)$ for minimal surfaces which are given locally by a graph  $z = z(x,y)$.

Recall, that the Gaussian curvature is given by $ K = -4 |R(w)|^{-2} ( 1 + |w|^2)^{-4}.$ Thus the umbilical points correspond to the poles of $R$, ~\cite{N} (pages 148 and 472). This is precisely where the representation fails.

\subsection{The first identity}

We have Ramanujan's identity, ~\cite{R}, Example $(1)$ page $38$, where 
$X$, $A$ are complex, $A$ is not an odd multiple of $\pi /2$:

$\frac{{\rm cos}(X+A)}{{\rm cos} (A)} = \Pi_{k=1}^{\infty} \{ ( 1-\frac{X}{(k - \frac{1}{2} \pi) -A} ) ( 1 + \frac{X}{(k - \frac{1}{2} \pi) +A} )\}$.

We take ln on both sides, to get:

\begin{eqnarray*}
&& {\rm ln} (\frac{{\rm cos} (X+A)}{{\rm cos} (A)}) \\
&=& \sum_{k=1}^{\infty} {\rm ln} (1-\frac{X}{(k - \frac{1}{2}) \pi - A} ) + \sum_{k=1}^{\infty} {\rm ln} ( 1 + \frac{X}{(k - \frac{1}{2}) \pi +A} )\\
&=&   \sum_{k=1}^{\infty} {\rm ln} (\frac{ (k - \frac{1}{2} \pi ) -(X + A)}{(k - \frac{1}{2}) \pi - A} ) + \sum_{k=1}^{\infty} {\rm ln} (  \frac{ (k - \frac{1}{2}) \pi + (X + A) }{(k - \frac{1}{2}) \pi +A} )
\end{eqnarray*}

The Scherk's second surface is given by 
$ z = {\rm ln} (\frac{{\rm cos}(t)}{{\rm cos}(x)})$ (see Nitsche, equation number $(27)$, page 71).

Let $X+A = y $ and $A=x$ in Ramanujan's identity.

Then, if $x$ is not an odd multiple of $\frac{\pi}{2}$, we have, 

\begin{eqnarray*}
{\rm ln} (\frac{{\rm cos}(y)}{{\rm cos}(x)}) &=& \sum_{k=1}^{\infty} {\rm log} (\frac{ y-(k - \frac{1}{2}) \pi )}{x - (k - \frac{1}{2})\pi}) + \sum_{k=1}^{\infty} {\rm log} (  \frac{y + (k - \frac{1}{2}) \pi}{x + (k - \frac{1}{2})\pi} )
\end{eqnarray*}

Now since the left hand side is the height function of a minimal surface, we 
can use its Weierstrass-Enneper representation.

$R(w) = \frac{2}{(1 - w^4)}$ leads to the Scherk's second minimal surface,  $z = {\rm ln} (\frac{{\rm cos} (y)}{{\rm cos}(x)})$,  ~\cite{N}, (page 71, 148).
This non-parametric representation is valid in the domain :

$\{ (x,y): | \sqrt{2}(x-y) - 4m\pi| < \pi,  | \sqrt{2}(x+y) - 4n \pi| < \pi \}$

where $m, n = 0, \pm 1, \pm 2,....$.

If we perform the integrals given by the W-E representation formula, we get 
\begin{eqnarray*}
x(\zeta) &=& x_0  + 2 {\rm Re} {\rm tan}^{-1}(\zeta)\\
y(\zeta) &=& y_0 - {\rm Im} {\rm ln} (\frac{1 + \zeta}{1-\zeta}) \\
z(\zeta) &=& z_0 + {\rm Re} {\rm ln} (\frac {1 + \zeta^2}{ 1 - \zeta^2})  .
\end{eqnarray*}

 If we take $x_0=y_0=z_0=0$ we get: 
\begin{eqnarray*}
x(\zeta) &=&  2 {\rm Re} {\rm tan}^{-1}(\zeta)  \\
y(\zeta) &=& - {\rm Im} {\rm ln} (\frac{1 + \zeta}{1-\zeta}) \\
z(\zeta) &=&  {\rm Re} {\rm ln} (\frac {1 + \zeta^2}{ 1 - \zeta^2}) . 
\end{eqnarray*}

Using  the fact that $ {\rm ln} (Z) = \rm{ln} |Z| + i \theta = \rm{ln} |Z| + i \rm{tan}^{-1} (\frac{\rm{Im} Z}{\rm{Re} Z})$ where $Z= | Z| e^{i \theta}$,   for $Z$ any complex number, one 
can easily check that  in the above parametrization, with $x_0=y_0=z_0=0$,   $$z = {\rm ln} (\frac{{\rm cos} (y)}{{\rm cos}(x)})$$ This parametrization  
 fails precisely  at $\zeta = \pm 1, \pm i$, the umbilical
points of the minimal surface (since these are precisely the poles of $R(w)$).

\begin{proposition}
Our first identity for $\zeta \neq \pm 1, \pm i$ is the following:
\begin{eqnarray*}
 & &   {\rm Re} {\rm ln}(\frac{ 1 + \zeta^2}{  1 - \zeta^2})   \\
& & =  \sum_{k=1}^{\infty} {\rm ln} (\frac{ - {\rm Im} {\rm ln} (\frac{1 + \zeta}{1-\zeta})  -(k - \frac{1}{2}) \pi}{  2 {\rm Re} {\rm tan}^{-1}(\zeta) - (k - \frac{1}{2})\pi}) + \sum_{k=1}^{\infty} {\rm ln} ( \frac{ - {\rm Im} {\rm ln} (\frac{1 + \zeta}{1-\zeta}) + (k - \frac{1}{2}) \pi}{ 2 {\rm Re} {\rm tan}^{-1}(\zeta)    + (k - \frac{1}{2})\pi})
\end{eqnarray*}
\end{proposition}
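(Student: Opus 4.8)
The plan is to combine the two ingredients that the excerpt has already laid out: Ramanujan's product identity (after taking logarithms) specialized to $X+A=y$, $A=x$, and the explicit Weierstrass–Enneper parametrization of Scherk's second surface with $R(w)=2/(1-w^4)$. Concretely, I would start from the identity
\begin{eqnarray*}
{\rm ln}\left(\frac{{\rm cos}(y)}{{\rm cos}(x)}\right) &=& \sum_{k=1}^{\infty}{\rm ln}\left(\frac{y-(k-\tfrac12)\pi}{x-(k-\tfrac12)\pi}\right) + \sum_{k=1}^{\infty}{\rm ln}\left(\frac{y+(k-\tfrac12)\pi}{x+(k-\tfrac12)\pi}\right)
\end{eqnarray*}
and then substitute the parametric expressions $x=x(\zeta)=2\,{\rm Re}\,{\rm tan}^{-1}(\zeta)$, $y=y(\zeta)=-{\rm Im}\,{\rm ln}\bigl(\tfrac{1+\zeta}{1-\zeta}\bigr)$, together with the fact (already asserted in the excerpt, via $z={\rm ln}(\cos y/\cos x)$ and the formula for the height function) that the left-hand side equals $z(\zeta)={\rm Re}\,{\rm ln}\bigl(\tfrac{1+\zeta^2}{1-\zeta^2}\bigr)$. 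Making all three substitutions simultaneously turns the trigonometric identity into precisely the claimed identity in $\zeta$.

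The steps, in order, are: (1) record Ramanujan's log-identity in the $x,y$ form, valid whenever $x$ is not an odd multiple of $\pi/2$; (2) recall from the preceding computation that for Scherk's second surface the W–E integrals give $x(\zeta)=2\,{\rm Re}\,{\rm tan}^{-1}(\zeta)$, $y(\zeta)=-{\rm Im}\,{\rm ln}\bigl(\tfrac{1+\zeta}{1-\zeta}\bigr)$, $z(\zeta)={\rm Re}\,{\rm ln}\bigl(\tfrac{1+\zeta^2}{1-\zeta^2}\bigr)$, and that along this parametrization $z={\rm ln}(\cos y/\cos x)$; (3) substitute these into the identity of step (1); (4) check that the substitution is legitimate — i.e. that the hypothesis "$x$ not an odd multiple of $\pi/2$" is automatically satisfied and that the series still converges — on the parameter domain $\zeta\neq\pm1,\pm i$; (5) rearrange signs so that the summands appear exactly as in the statement (note $-(k-\tfrac12)\pi$ versus $+(k-\tfrac12)\pi$ pairing comes from writing $y-(k-\tfrac12)\pi = -\bigl((k-\tfrac12)\pi - y\bigr)$, etc.).

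The main obstacle is step (4): one has to be careful that the non-parametric representation $z=z(x,y)$ of Scherk's second surface is only valid on the strips $\{|\sqrt2(x-y)-4m\pi|<\pi,\ |\sqrt2(x+y)-4n\pi|<\pi\}$, so a priori the substituted identity is only justified on the preimage of those strips under $\zeta\mapsto(x(\zeta),y(\zeta))$, not on all of $\zeta\neq\pm1,\pm i$. I would argue that both sides of the proposed identity are real-analytic (indeed harmonic-type) functions of $\zeta$ on the punctured domain $\zeta\neq\pm1,\pm i$ — the left side manifestly, and the right side because the series converges locally uniformly there (the terms are $O(k^{-2})$ after combining the $\pm k$ pair) — and that they agree on an open set, hence by analytic continuation on the whole connected punctured domain. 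A secondary, more bookkeeping-level point is verifying the claim "one can easily check that $z={\rm ln}(\cos y/\cos x)$" in the given parametrization, using ${\rm ln}(Z)={\rm ln}|Z|+i\,{\rm tan}^{-1}({\rm Im}\,Z/{\rm Re}\,Z)$; this is the identity that makes the substitution of $z(\zeta)$ for ${\rm ln}(\cos y/\cos x)$ legitimate, and I would spell out the short computation relating $\cos\bigl(-{\rm Im}\,{\rm ln}\tfrac{1+\zeta}{1-\zeta}\bigr)/\cos\bigl(2\,{\rm Re}\,{\rm tan}^{-1}\zeta\bigr)$ to $\bigl|\tfrac{1+\zeta^2}{1-\zeta^2}\bigr|$. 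Once those two continuity/validity points are settled, the identity is just the algebraic rearrangement in step (5).
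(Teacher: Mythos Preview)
Your proposal is correct and follows essentially the same route as the paper: substitute the Weierstrass--Enneper parametrization $x(\zeta),y(\zeta),z(\zeta)$ of Scherk's second surface into the logarithmic form of Ramanujan's product identity. The paper's proof is in fact just your steps (1)--(3) written out; your additional care in step (4) about domain of validity and analytic continuation, and the verification that $z=\ln(\cos y/\cos x)$ in the parametrization, fills in rigor that the paper simply asserts, while step (5) is not actually needed since direct substitution already yields the stated form.
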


\begin{proof}

Substituting the W-E in Ramanujan's identity, we get:

\begin{eqnarray*}
& &  {\rm Re} {\rm ln}(\frac{ 1 + \zeta^2}{  1 - \zeta^2}) \\ 
& & =    {\rm ln} (\frac{{\rm cos}(- {\rm Im} {\rm ln} (\frac{1 + \zeta}{1-\zeta}) )}{{\rm cos}( 2 {\rm Re} {\rm tan}^{-1}(\zeta) )}) \\
&=& \sum_{k=1}^{\infty} {\rm ln} (\frac{- {\rm Im} {\rm ln} (\frac{1 + \zeta}{1-\zeta})  -(k - \frac{1}{2}) \pi}{ 2 {\rm Re} {\rm tan}^{-1}(\zeta) - (k - \frac{1}{2})\pi}) + \sum_{k=1}^{\infty} {\rm ln} (\frac{- {\rm Im} {\rm ln} (\frac{1 + \zeta}{1-\zeta}) + (k - \frac{1}{2}) \pi}{ 2 {\rm Re} {\rm tan}^{-1}(\zeta)    + (k - \frac{1}{2})\pi})
\end{eqnarray*}

Thus we get our first identity.

\end{proof}

Notice that the transformations  $y \rightarrow -y$ or $x \rightarrow -x$ or $x, y \rightarrow -x, -y$ give the same height 
function $z = {\rm ln} (\frac{{\rm cos} (y)}{{\rm cos}(x)})$ and hence give new identities or different ways of writing the same identities. 

For instance, $y \rightarrow -y$ gives:

\begin{eqnarray*}
 & &  {\rm Re} {\rm ln} (\frac{ 1 + \zeta^2}{  1 - \zeta^2})  \\
& & = \sum_{k=1}^{\infty} {\rm ln} (\frac{ + {\rm Im} {\rm ln} (\frac{1 + \zeta}{1-\zeta})  -(k - \frac{1}{2}) \pi )}{  + 2 {\rm Re} {\rm tan}^{-1}(\zeta) - (k - \frac{1}{2})\pi}) + \sum_{k=1}^{\infty} {\rm ln} ( \frac{ + {\rm Im} {\rm ln} (\frac{1 + \zeta}{1-\zeta}) + (k - \frac{1}{2}) \pi}{ + 2 {\rm Re} {\rm tan}^{-1}(\zeta)    + (k - \frac{1}{2})\pi})
\end{eqnarray*}

\subsection{The second identity}

Notice that the minimal surface equation is just the wick rotated 
Born Infeld equation. We exploited this fact in ~\cite{D1},  ~\cite{D2}.

If the minimal surface is given by $z=z(x,t)$ locally,
then it follows the equation 

$$(1+ z_t^2) z_{xx} - 2 z_x z_t z_{xt} + (1 + z_x^2) z_{tt} =0$$

The Born-Infeld solitons follow the equation

$$(1- z_t^2) z_{xx} + 2 z_x z_t z_{xt} - (1 + z_x^2) z_{tt}
=0$$

which can be obtained from the first equation by  wick rotation, namely, 
$t \rightarrow it$.

Thus, if $z = {\rm ln} (\frac{{\rm cos}(t)}{{\rm cos}(x)})$ is a solution of the 
minimal surface equation, then
$z = {\rm ln} (\frac{{\rm cos}(it)}{{\rm cos}(x)}) ={\rm ln} (\frac{{\rm cosh }(t)}{{\rm cos}(x)}) $ is a solution of the B-I equation.

(We let $x$, $t$ and $z$ to be complex.)

We can find the analogue of the Weierstrass-Enneper representation of the B-I
solitons in Whitham, ~\cite{W}, page 617, (based on a method by Barbishov and Chernikov , ~\cite{BC}).

Following their calculation for 
$z  ={\rm ln} (\frac{{\rm cosh }(t)}{{\rm cos}(x)}) $ 
we get $z_x = {\rm tan} x$, $z_t = {\rm tanh}(t)$. 
$u = \frac{z_x - z_t}{2}$ and $v = \frac{z_x + z_t}{2}$.

Let $r =  \frac{\sqrt{1 + 4 u v } - 1}{2 v}$ and $s = \frac{\sqrt{1 + 4 u v } - 1}{2 u}$. 

Then $u = \frac{r}{1 -rs}$ and $v = \frac{s}{1-rs}$. 

Then ${\rm tan}(x) = \frac{r + s }{1-rs}$ and ${\rm tanh}(t) = \frac{s-r}{1-rs}$.

In other words, 

$x = x_0 + {\rm tan}^{-1} (r) + {\rm tan}^{-1} (s)$

$t = t_0 - {\rm tanh }^{-1} (r) + {\rm tan h }^{-1} (s)$

$z= z_0 + \frac{1}{2} {\rm ln} ( \frac{1 + r^2}{1-r^2} ) +  \frac{1}{2} {\rm ln} ( \frac{1 + s^2}{1-s^2} ) $

Here, $F(r) = {\rm tan}^{-1} (r) +  {\rm tanh}^{-1} (r)$
and $G(s) = {\rm tan}^{-1} (s) +  {\rm tanh}^{-1} (s)$ , (notation as in ~\cite{W}). Also, $r$ and $s$ are complex parameters, since $x$, $t$ and $z$ are complex variables.

Again taking $x_0=y_0=z_0 = 0$, we get a parametrization for the complex soliton
$z= \rm{ln} (\frac{cosh(t)}{cos(x)})$. (Easy to check this). 

$x =  {\rm tan}^{-1} (r) + {\rm tan}^{-1} (s)$

$t = - {\rm tanh }^{-1} (r) + {\rm tan h }^{-1} (s)$

$z=   \frac{1}{2} {\rm ln} ( \frac{1 + r^2}{1-r^2} ) +  \frac{1}{2} {\rm ln} ( \frac{1 + s^2}{1-s^2} ) $

Note that if one takes  a special relation between the parameters $r = \bar{s} = \zeta$ and $y = it$ then we get back the  parametrizaton of $z = \rm{ln} (\frac{cos y }{cos x})$.

\begin{proposition}
We have our second identity, i.e. for $r, s \neq \pm 1$:
\begin{eqnarray*}
 & &   \frac{1}{2} {\rm ln} ( \frac{1 + r^2}{1-r^2} ) +  \frac{1}{2} {\rm ln} ( \frac{1 + s^2}{1-s^2} )  \\
& &  =  \sum_{k=1}^{\infty} {\rm ln} (\frac{ (k - \frac{1}{2}) \pi -i ( - {\rm tanh }^{-1} (r) + {\rm tan h }^{-1} (s)  ) }{ (k - \frac{1}{2})\pi -  (   {\rm tan}^{-1} (r) + {\rm tan}^{-1} (s) )  }) \\
& & + \sum_{k=1}^{\infty} {\rm ln} (  \frac{ (k - \frac{1}{2}) \pi + i ( - {\rm tanh }^{-1} (r) + {\rm tan h }^{-1} (s)  ) }{ (k - \frac{1}{2})\pi  + {\rm tan}^{-1} (r) + {\rm tan}^{-1} (s)})
\end{eqnarray*}

 \end{proposition}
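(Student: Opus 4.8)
The plan is to run exactly the argument used for the first identity, but with the Wick rotation $t\mapsto it$ that turns the minimal surface equation into the Born--Infeld equation, and with the Barbishov--Chernikov--Whitham parametrization in place of the Weierstrass--Enneper one.

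First I would note that $\cosh t=\cos(it)$, so the Born--Infeld soliton of interest is
\[
z=\ln\!\left(\frac{\cosh t}{\cos x}\right)=\ln\!\left(\frac{\cos(it)}{\cos x}\right),
\]
which is precisely Scherk's height function $\ln(\cos y/\cos x)$ evaluated at $y=it$. Hence Ramanujan's identity in the form already derived above, namely
\[
\ln\!\left(\frac{\cos y}{\cos x}\right)=\sum_{k=1}^{\infty}\ln\!\left(\frac{y-(k-\tfrac12)\pi}{x-(k-\tfrac12)\pi}\right)+\sum_{k=1}^{\infty}\ln\!\left(\frac{y+(k-\tfrac12)\pi}{x+(k-\tfrac12)\pi}\right),
\]
valid whenever $x$ is not an odd multiple of $\pi/2$, applies with $y$ replaced by $it$.

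Next I would substitute the parametrization of the complex soliton $z=\ln(\cosh t/\cos x)$ obtained above (with $x_0=t_0=z_0=0$), namely $x=\tan^{-1}r+\tan^{-1}s$, $t=-\tanh^{-1}r+\tanh^{-1}s$, and $z=\tfrac12\ln\frac{1+r^2}{1-r^2}+\tfrac12\ln\frac{1+s^2}{1-s^2}$. Then $y=it=-\,i\tanh^{-1}r+i\tanh^{-1}s$. In the first sum I would rewrite each factor as $\dfrac{(k-\frac12)\pi-y}{(k-\frac12)\pi-x}$; inserting the expressions for $x$ and $y$ produces exactly the first sum in the statement, while the second sum is already in the required shape. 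Since the left-hand side $\ln(\cos y/\cos x)$ is by construction the height function $z$, it equals $\tfrac12\ln\frac{1+r^2}{1-r^2}+\tfrac12\ln\frac{1+s^2}{1-s^2}$, and the identity follows.

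The step to watch is the analytic bookkeeping: because $r$ and $s$ are complex, one must track the branches of $\ln$, $\tan^{-1}$ and $\tanh^{-1}$ so that the infinite product converges and the two sides represent the same branch. This is what forces the excluded values $r,s\neq\pm1$, where $\tanh^{-1}$ and the logarithms $\ln\frac{1+r^2}{1-r^2}$, $\ln\frac{1+s^2}{1-s^2}$ become singular; in addition one must avoid the configurations in which $\tan^{-1}r+\tan^{-1}s$ is an odd multiple of $\pi/2$, since that is the hypothesis under which Ramanujan's product for $\cos(X+A)/\cos A$ is valid. Apart from these branch and domain considerations, the proof is the same substitution computation as for the first identity.
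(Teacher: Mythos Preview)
Your proposal is correct and follows essentially the same route as the paper: apply Ramanujan's product identity to $\ln(\cosh t/\cos x)=\ln(\cos(it)/\cos x)$ and then substitute the Barbishov--Chernikov--Whitham parametrization $x=\tan^{-1}r+\tan^{-1}s$, $t=-\tanh^{-1}r+\tanh^{-1}s$, $z=\tfrac12\ln\frac{1+r^2}{1-r^2}+\tfrac12\ln\frac{1+s^2}{1-s^2}$. Your remarks on branch choices and on the extra condition that $\tan^{-1}r+\tan^{-1}s$ not be an odd multiple of $\pi/2$ go slightly beyond what the paper makes explicit, but the argument is otherwise the same substitution.
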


\begin{proof}

By the Ramanujan's identity (which we used to get our first identity) we have:

\begin{eqnarray*}
{\rm ln} (\frac{{\rm cosh}(t)}{{\rm cos}(x)}) &=& \sum_{k=1}^{\infty} {\rm ln} (\frac{ (k - \frac{1}{2}) \pi -it}{ (k - \frac{1}{2})\pi - x}) + \sum_{k=1}^{\infty} {\rm ln} (  \frac{ (k - \frac{1}{2}) \pi + it }{ (k - \frac{1}{2})\pi + x} )
\end{eqnarray*}

Substituting in Ramanujan's identity, we get: 

\begin{eqnarray*}
& &   \frac{1}{2} {\rm ln} ( \frac{1 + r^2}{1-r^2} ) +  \frac{1}{2} {\rm ln} ( \frac{1 + s^2}{1-s^2} ) \\
& & = {\rm ln} (\frac{{\rm cosh}( - {\rm tanh }^{-1} (r) + {\rm tan h }^{-1} (s)   )}{{\rm cos}( {\rm tan}^{-1} (r) + {\rm tan}^{-1} (s) )})\\
& & = \sum_{k=1}^{\infty} {\rm ln} (\frac{ (k - \frac{1}{2}) \pi -i (  - {\rm tanh }^{-1} (r) + {\rm tan h }^{-1} (s)  ) }{ (k - \frac{1}{2})\pi -  (  {\rm tan}^{-1} (r) + {\rm tan}^{-1} (s) )  }) \\
& & +  \sum_{k=1}^{\infty} {\rm ln} (  \frac{ (k - \frac{1}{2}) \pi + i ( - {\rm tanh }^{-1} (r) + {\rm tan h }^{-1} (s)  ) }{ (k - \frac{1}{2})\pi + {\rm tan}^{-1} (r) + {\rm tan}^{-1} (s)})
\end{eqnarray*}

Thus we have our second identity.
\end{proof}

\subsection{The third identity}

  By Ramanujan's identity,~\cite{R} page 39, ~\cite{K}, for $A$ and $B$ real,
we have,

\begin{eqnarray*} 
{\rm tan}^{-1} [ {\rm tan h}  A {\rm cot} B ] = \sum^{k= \infty}_{k = -\infty} {\rm tan }^{-1} ( \frac{A}{B + k \pi}). 
\end{eqnarray*}

Therefore, 
\begin{eqnarray*} 
{\rm tan}^{-1} ( {\rm tanh}t {\rm cot}x  ) = \sum^{k= \infty}_{k = -\infty} {\rm tan }^{-1} ( \frac{t}{x + k \pi}). 
\end{eqnarray*}

Separating the $k=0$ term, which gives the  the height function of the 
helicoid, we get,

\begin{eqnarray*}
 {\rm tan}^{-1} (\frac{t}{x}) &= &{\rm tan}^{-1} [ {\rm tan h}  t {\rm cot}x  ]\\
  & & - \sum^{k= \infty}_{k = 1} {\rm tan }^{-1} ( \frac{t}{x + k \pi})
- \sum^{k= \infty}_{k = 1} {\rm tan }^{-1} ( \frac{t}{x - k \pi})
\end{eqnarray*}

Using the Weierstrass Enneper representation of the helicoid, ~\cite{D2}, we get

$x = -\frac{1}{2} \rm{Im} ( \zeta + \frac{1}{\zeta})$

$t = \frac{1}{2} \rm{Re} ( \zeta - \frac{1}{\zeta})$

$z = -\frac{\pi}{2} +  \rm{Im} (ln \zeta) $

{\bf Correction:} In ~\cite{D2}, we missed out $z_0=-\frac{\pi}{2} $.

This representation is invalid at $\zeta =0$. 

Substituting this W-E representation in  $z= \rm{ tan}^{-1} \frac{t}{x}$, we get 
\begin{eqnarray*}
 & &  -\frac{\pi}{2} +  \rm{Im} (ln \zeta) =  {\rm tan}^{-1} (\frac{ \frac{1}{2} \rm{Re} ( \zeta - \frac{1}{\zeta})  }{-\frac{1}{2} \rm{Im} ( \zeta + \frac{1}{\zeta})  }) = \\ & & {\rm tan}^{-1} [ {\rm tan h} ( \frac{1}{2} \rm{Re} ( \zeta - \frac{1}{\zeta}) ){\rm cot} ( -\frac{1}{2} \rm{Im} ( \zeta + \frac{1}{\zeta})  ) ]\\
  & & - \sum^{k= \infty}_{k = 1} {\rm tan }^{-1} ( \frac{\frac{1}{2} \rm{Re} ( \zeta - \frac{1}{\zeta})  }{-\frac{1}{2} \rm{Im} ( \zeta + \frac{1}{\zeta}) + k \pi})
- \sum^{k= \infty}_{k = 1} {\rm tan }^{-1} ( \frac{\frac{1}{2} \rm{Re} ( \zeta - \frac{1}{\zeta})}{ -\frac{1}{2} \rm{Im} ( \zeta + \frac{1}{\zeta})- k \pi})
\end{eqnarray*}

Thus we get our third identity, namely,
\begin{proposition} 
For $\zeta \neq 0$, 
\begin{eqnarray*}
 & &   -\frac{\pi}{2} +  \rm{Im} (ln \zeta) -  {\rm tan}^{-1} [ {\rm tan h} ( \frac{1}{2} \rm{Re} ( \zeta - \frac{1}{\zeta}) ){\rm cot} ( -\frac{1}{2} \rm{Im} ( \zeta + \frac{1}{\zeta})  ) ]   \\
  &=& - \sum^{k= \infty}_{k = 1} {\rm tan }^{-1} ( \frac{\frac{1}{2} \rm{Re} ( \zeta - \frac{1}{\zeta})  }{-\frac{1}{2} \rm{Im} ( \zeta + \frac{1}{\zeta}) + k \pi})
- \sum^{k= \infty}_{k = 1} {\rm tan }^{-1} ( \frac{\frac{1}{2} \rm{Re} ( \zeta - \frac{1}{\zeta})}{ -\frac{1}{2} \rm{Im} ( \zeta + \frac{1}{\zeta})- k \pi})
\end{eqnarray*}
\end{proposition}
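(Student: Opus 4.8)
The plan is to repeat the mechanism that produced the first two identities, now feeding the Weierstrass--Enneper data of the helicoid into the arctangent product--sum identity of Ramanujan quoted above, namely $\tan^{-1}[\tanh A\,\cot B]=\sum_{k=-\infty}^{\infty}\tan^{-1}(A/(B+k\pi))$ for $A,B$ real. First I would specialize $A=t$ and $B=x$; this is legitimate because in the helicoid parametrization $x=-\tfrac12\,{\rm Im}(\zeta+\tfrac1\zeta)$ and $t=\tfrac12\,{\rm Re}(\zeta-\tfrac1\zeta)$ are genuine real-valued functions of the complex parameter $\zeta$. Extracting the $k=0$ term from the doubly infinite sum isolates $\tan^{-1}(t/x)$, which is exactly the height function of the helicoid, leaving
\[
\tan^{-1}(t/x)=\tan^{-1}[\tanh t\,\cot x]-\sum_{k=1}^{\infty}\tan^{-1}\!\left(\frac{t}{x+k\pi}\right)-\sum_{k=1}^{\infty}\tan^{-1}\!\left(\frac{t}{x-k\pi}\right),
\]
where the two tail sums are to be read with the $+k\pi$ and $-k\pi$ terms paired, following the symmetric summation convention implicit in Ramanujan's identity (each sum taken separately diverges for $t\neq 0$, but their combination converges like $\sum k^{-2}$).

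Next I would substitute the Weierstrass--Enneper representation of the helicoid recalled just before the statement, $x=-\tfrac12\,{\rm Im}(\zeta+\tfrac1\zeta)$, $t=\tfrac12\,{\rm Re}(\zeta-\tfrac1\zeta)$, $z=-\tfrac\pi2+{\rm Im}(\ln\zeta)$, valid for $\zeta\neq 0$, together with the fact that on this surface $z=\tan^{-1}(t/x)$. The left-hand side of the display then becomes $-\tfrac\pi2+{\rm Im}(\ln\zeta)$; the first term on the right becomes $\tan^{-1}[\tanh(\tfrac12{\rm Re}(\zeta-\tfrac1\zeta))\cot(-\tfrac12{\rm Im}(\zeta+\tfrac1\zeta))]$; and in the tail sums $x\pm k\pi$ is replaced by $-\tfrac12{\rm Im}(\zeta+\tfrac1\zeta)\pm k\pi$. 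Transposing the $\tan^{-1}[\cdots]$ term to the left-hand side produces precisely the claimed identity, so no computation beyond this substitution is required.

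The one genuinely delicate point is the branch bookkeeping. Ramanujan's identity, the Weierstrass--Enneper formula for $z$, and the relation $z=\tan^{-1}(t/x)$ each pin down $z$ only modulo $\pi\ZZ$, so I must check that the additive constant $z_0=-\tfrac\pi2$ — the correction to \cite{D2} noted above — is exactly the one reconciling the three normalizations, and that the $k=0$ extraction is valid, i.e.\ that ${\rm Im}(\zeta+\tfrac1\zeta)\neq 0$ on the domain under consideration (with the value at points where it vanishes obtained by continuity, the $\tan^{-1}(t/x)$ there being $\pm\tfrac\pi2$). This normalization is forced by comparing the two sides at a single convenient value of $\zeta$. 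I expect this branch/normalization check, rather than any algebra, to be the only real obstacle; once it is settled, the identity is just the formal substitution above.
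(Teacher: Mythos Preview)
Your proposal is correct and follows exactly the route the paper takes: specialize Ramanujan's arctangent identity to $A=t$, $B=x$, peel off the $k=0$ term to expose $\tan^{-1}(t/x)$, and then substitute the helicoid's Weierstrass--Enneper data $x=-\tfrac12\,{\rm Im}(\zeta+\tfrac1\zeta)$, $t=\tfrac12\,{\rm Re}(\zeta-\tfrac1\zeta)$, $z=-\tfrac\pi2+{\rm Im}(\ln\zeta)$. Your remarks on the symmetric-summation convergence and the branch normalization are more explicit than anything the paper records, but they refine rather than alter the argument.
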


\subsection{The fourth identity}
By Ramanujan's identity, ~\cite{R} page 39, ~\cite{K}, for $A$ and $B$ real,

\begin{eqnarray*} 
{\rm tan}^{-1} [ {\rm tan h}  A {\rm cot} B ] = \sum^{k= \infty}_{k = -\infty} {\rm tan }^{-1} ( \frac{A}{B + k \pi}). 
\end{eqnarray*}

The Scherk's first surface is given by 
$$ {\rm tanh}(\frac{z}{a}) = {\rm tan}(\frac{x}{a {\rm cos}(\alpha)}) {\rm tan}(\frac{y}{a {\rm sin}(\alpha)})$$.
This non-parametric representation is valid in the domain
$$ \{ (x,y): | \frac{x}{a {\rm cos}(\alpha)} - \frac{y}{a {\rm sin}(\alpha)} - 2 m a \pi | < \frac{a \pi}{2},  | \frac{x}{a {\rm cos}(\alpha)} + \frac{y}{a {\rm sin}(\alpha)} - 2 n a \pi | < \frac{a \pi}{2} \} $$
where $m, n = 0 \pm 1, \pm 2,...$.

We get
\begin{eqnarray*}
 \frac{x}{a {\rm cos}(\alpha)} &=& {\rm tan}^{-1}({\rm tanh}(\frac{z}{a}) {\rm cot}(\frac{y}{a {\rm sin}(\alpha)})\\
&=&  \sum^{k= \infty}_{k = -\infty} {\rm tan }^{-1} ( \frac{z {\rm sin}(\alpha)}{ y  + a {\rm sin}(\alpha) k \pi}). 
\end{eqnarray*}

By Nitsche, ~\cite{N}, page 148 and page 70, $R(w) = \frac{-2a i {\rm sin}(2 \alpha)}{ 1 + 2 w^2 {\rm cos}(2 \alpha) + w^4} $ in the Weierstrass-Enneper representation with $0 < \alpha < \frac{\pi}{2}$, $a > 0$ , leads to the Scherk's first minimal surface.

Even though one can perform the W-E integrals for a general $\alpha$, we choose $\alpha = \frac{\pi}{4}$.

Performing the integrals, ~\cite{GR}, page 74  and  84, we get:

\begin{eqnarray*}
x(\zeta) &=&  \frac{a}{\sqrt{2}} (x_0 +  \rm{Im} \rm{ln}(\frac{\zeta^2 + \sqrt{2} \zeta + 1}{\zeta^2 - \sqrt{2} \zeta + 1}) \\
y(\zeta) &=&  \frac{a}{ \sqrt{2}}( y_0 +2 \rm{Re} (\arctan(\frac{\zeta \sqrt{2}}{(1 - \zeta^2)})  \\
z(\zeta) &=&  a( z_0 + 2 \rm{Im} \rm{tan}^{-1} \zeta^2)
\end{eqnarray*}

By a suitable choice of $(x_0, y_0, z_0)$ this minimal surface satisfies the equation  $$ {\rm tanh}(\frac{z}{a}) = {\rm tan}(\frac{\sqrt{2} x}{a}) {\rm tan}(\frac{\sqrt{2}y}{a})$$ 

The surface passes through $x_0, y_0, z_0$ at $\zeta=0$.

This representation is invalid at the four points $ \zeta = \pm e^{\pm i \frac{\pi}{4}}$
which correspond to the umbilical points of the minimal surface (poles of $R$).

Substituting in Ramanujan's identity,  we get:

\begin{eqnarray*}
& &  \frac{\sqrt{2}(x_0 +  \rm{Im} \rm{ln} (\frac{\zeta^2 + \sqrt{2} \zeta + 1}{\zeta^2 - \sqrt{2} \zeta + 1}))}{a} \\
&=& {\rm tan}^{-1}({\rm tanh}(\frac{ z_0 +2 \rm{Im} \rm{tan}^{-1} \zeta^2 }{a}) {\rm cot}(\sqrt{2}(\frac{ y_0 +2 \rm{Re}(\arctan(\frac{\zeta \sqrt{2}}{(1 - \zeta^2)})}{a})) \\
&=&  \sum^{k= \infty}_{k = -\infty} {\rm tan }^{-1} ( \frac{z_0 + 2 \rm{Im} \rm{tan}^{-1} \zeta^2 }{\sqrt{2}(y_0 +2 \rm{Re} \arctan(\frac{\zeta \sqrt{2}}{1 - \zeta^2}))   + a k \pi})
\end{eqnarray*}

We take $a= \sqrt{2}$.
To find $x_0, y_0, z_0$ we  try various values of $\zeta$.

First note that if $\zeta = \zeta_1$ any real number, 
we have the identity

$\rm{tan} (x_0) = \rm{tanh} (\frac{z_0}{\sqrt{2}}) \rm{cot}( y_0 + 2 \rm{tan}^{-1}(\frac{\sqrt{2}\zeta_1}{(1-\zeta_1^2)})$ for all $\zeta_1$ real.
This can be true only if $z_0=0$ and $x_0 = n \pi$. 
 
Next we try $\zeta= \zeta_2 $ purely imaginary.

Let $C_1= \rm{Im} \rm{ln}(\frac{\zeta_2^2 + \sqrt{2} \zeta_2 + 1}{\zeta_2^2 - \sqrt{2} \zeta_2 + 1})$, $C_2= 2 \rm{Re} (\arctan(\frac{\zeta_2 \sqrt{2}}{(1 - \zeta_2^2)}) =0$ , $C_3= 2 \rm{Im} \rm{tan}^{-1} \zeta_2^2 =0$.

Then
$\rm{tan}(n \pi +  C_1) = \rm{tanh}(\frac{z_0}{\sqrt{2}}) \cdot \rm{cot} ( y_0)$ or, $\rm{tan}(y_0) \cdot \rm{tan}(n \pi +  C_1) = \rm{tanh}(\frac{z_0}{\sqrt{2}})$.

Since  $z_0 =0$, $y_0 = m \pi$. 

$m, n$ could be fixed by taking $\zeta=\zeta_3, \zeta_4$ two arbitrary complex 
numbers.

Thus we get our fourth identity:
\begin{proposition}
For $\zeta \neq \pm e^{\pm i \frac{\pi}{4}}$, there exists two integers $m,n$ 
such that    
\begin{eqnarray*} 
& & (n \pi +  \rm{Im} \rm{ln}(\frac{\zeta^2 + \sqrt{2} \zeta + 1}{\zeta^2 - \sqrt{2} \zeta + 1}) \\
&=&  \sum^{k= \infty}_{k = -\infty} {\rm tan }^{-1} ( \frac{\sqrt{2} \rm{Im} \rm{tan}^{-1} \zeta^2  }{m \pi +2 \rm{Re} \arctan(\frac{\zeta \sqrt{2}}{1 - \zeta^2}) +  k \pi})
\end{eqnarray*}
\end{proposition}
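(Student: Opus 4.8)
The plan is to repeat the three-move strategy behind the first three identities. First, one takes Ramanujan's arctangent--cotangent identity $\tan^{-1}[\tanh A\,\cot B]=\sum_{k=-\infty}^{\infty}\tan^{-1}(\tfrac{A}{B+k\pi})$. Second, one recognises its left side as the non-parametric equation of a concrete minimal surface — here Scherk's first surface with $\alpha=\frac{\pi}{4}$ — and substitutes that surface's Weierstrass--Enneper parametrization. Third, one pins down the three free translation constants $x_0,y_0,z_0$ by testing special values of $\zeta$.

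First I would nail down the Weierstrass--Enneper data. Specializing the kernel $R(w)=\frac{-2ai\sin(2\alpha)}{1+2w^2\cos(2\alpha)+w^4}$ from \cite{N} to $\alpha=\frac{\pi}{4}$ gives the much simpler $R(w)=\frac{-2ai}{1+w^4}$, so the three integrals $\mathrm{Re}\int(1-w^2)R\,dw$, $\mathrm{Re}\int i(1+w^2)R\,dw$, $\mathrm{Re}\int 2wR\,dw$ are elementary partial-fraction integrals; carrying them out with the standard tables of \cite{GR} reproduces the closed forms for $x(\zeta),y(\zeta),z(\zeta)$ displayed just before the Proposition, each with one additive constant. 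I would check that all three collapse to the constants at $\zeta=0$, and that the only poles of $R$ lie at $\zeta=\pm e^{\pm i\pi/4}$ — so these are the umbilic points where the parametrization fails, matching the exclusion in the statement.

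Next I would substitute $x(\zeta),y(\zeta),z(\zeta)$ into the defining equation $\tanh(z/a)=\tan(\tfrac{\sqrt2\,x}{a})\tan(\tfrac{\sqrt2\,y}{a})$, rewrite it as $\tan(\tfrac{\sqrt2\,x}{a})=\tanh(\tfrac{z}{a})\cot(\tfrac{\sqrt2\,y}{a})$, apply $\tan^{-1}$, and invoke Ramanujan's identity with $A=\tfrac{z}{a}$, $B=\tfrac{\sqrt2\,y}{a}$. Since both sides are analytic in $\zeta$ on the complement of the umbilic points, it suffices to know the identity on a real-parameter slice and continue analytically; taking $a=\sqrt2$ then reduces the denominator increments to $k\pi$ and yields the right-hand sum of the Proposition, modulo the constants $x_0,y_0,z_0$.

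The genuine content — and the reason the statement only claims that \emph{some} integers $m,n$ work rather than exhibiting them — is the last step: branch-and-strip bookkeeping. The non-parametric form of Scherk's surface holds only inside the strips $\big|\tfrac{\sqrt2\,x}{a}\pm\tfrac{\sqrt2\,y}{a}-2na\pi\big|<\tfrac{a\pi}{2}$, and both $\mathrm{Im}\ln(\cdot)$ and $\tan^{-1}$ are multivalued, so matching the Weierstrass--Enneper output against the defining equation forces shifts by integer multiples of $\pi$. I would resolve $z_0$ and $x_0$ first by inserting a real $\zeta=\zeta_1$: the quadratics $\zeta_1^2\pm\sqrt2\zeta_1+1$ have negative discriminant and hence stay positive, so all the imaginary-part terms vanish and the relation degenerates to $\tan(x_0)=\tanh(z_0/\sqrt2)\cot(\,\cdot\,)$ for \emph{every} real $\zeta_1$; since $2\arctan\frac{\sqrt2\zeta_1}{1-\zeta_1^2}$ sweeps a full period as $\zeta_1$ runs over the reals (avoiding $\pm1$), this forces $\tan(x_0)=0$, hence $z_0=0$ and $x_0$ an integer multiple of $\pi$. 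A purely imaginary $\zeta=\zeta_2$, for which the $y$- and $z$-data also vanish, then forces $y_0$ to be an integer multiple of $\pi$ as well. The surviving indeterminacy is exactly the pair $m,n$, which one fixes numerically at two further generic complex values $\zeta_3,\zeta_4$; dropping the now-trivial additive constants gives the stated identity. The main obstacle throughout is precisely this bookkeeping of the branches of $\ln$ and $\tan^{-1}$ against the strips of validity — the algebra itself is routine.
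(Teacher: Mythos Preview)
Your proposal is correct and follows essentially the same route as the paper: Ramanujan's $\tan^{-1}[\tanh A\cot B]$ identity applied to Scherk's first surface at $\alpha=\pi/4$, the Weierstrass--Enneper data from $R(w)=\tfrac{-2ai}{1+w^4}$, the choice $a=\sqrt2$, and the same two specializations (real $\zeta$, then purely imaginary $\zeta$) to pin down $z_0=0$, $x_0\in\pi\ZZ$, $y_0\in\pi\ZZ$, with the residual integers $m,n$ left to be fixed at two further generic points. Your discriminant remark and the analytic-continuation comment are mild elaborations, but the structure and every key step coincide with the paper's argument.
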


\section{Correction to a previous paper}

There are corrections to the paper, Dey ~\cite{D1}.

1. The method of deriving the W-E representation adapted in this paper is due to Barbishov and Chernikov ~\cite{BC}
and not Whitham (as erroneously mentioned in the abstract).  In ~\cite{BC}, 
Barbishov and Chernikov develop this  method in the context of Born-Infeld solitons, which is outlined by Whitham  in ~\cite{W}.

2. The method fails precisely when when $\phi_{zz} \phi_{\bar{z} \bar{z}} - (\phi_{{z} \bar{z}})^2 =0$ (as explained in the paper). By a calculation, one shows that 
 $\phi_{zz} \phi_{\bar{z} \bar{z}} - (\phi_{{z} \bar{z}})^2=  (\phi_{xx} \phi_{yy} - \phi_{xy}^2 )$ and thus the method breaks down precisely when $ (\phi_{xx} \phi_{yy} - \phi_{xy}^2 )=0$ , i.e. at the umbilical points. 
This is in accordance with the usual derivation of Weierstrass-Enneper 
representation of minimal surfaces. I had mistakenly mentioned in ~\cite{D2} that they are two different conditions.

\end{document}